\newtheorem{thm}{Theorem}[section]
\newtheorem{cor}[thm]{Corollary}
\theoremstyle{definition}
\newtheorem{rem}[thm]{Remark}
\numberwithin{equation}{section}
\begin{document}

%\begin{frontmatter}

%% Title, authors and addresses

%% use the tnoteref command within \title for footnotes;
%% use the tnotetext command for theassociated footnote;
%% use the fnref command within \author or \address for footnotes;
%% use the fntext command for theassociated footnote;
%% use the corref command within \author for corresponding author footnotes;
%% use the cortext command for theassociated footnote;
%% use the ead command for the email address,
%% and the form \ead[url] for the home page:
%% \title{Title\tnoteref{label1}}
%% \tnotetext[label1]{}
%% \author{Name\corref{cor1}\fnref{label2}}
%% \ead{email address}
%% \ead[url]{home page}
%% \fntext[label2]{}
%% \cortext[cor1]{}
%% \address{Address\fnref{label3}}
%% \fntext[label3]{}

\title{Spectra of Bochner-Riesz means on $L^p$}

\author{Yang Chen\\
College of Mathematics and
Computer Science\\
Hunan Normal University\\
Hunan 410081, P. R. China\\
E-mail: yang\_chen0917@163.com
\and
Qiquan Fang\\
School of Science\\
Zhejiang University of Science and Technology\\
Hangzhou, Zhejiang 310023, P. R. China\\
E-mail: fendui@yahoo.com
\and
Qiyu Sun\\
Department of Mathematics\\
University of Central Florida\\
Orlando, FL 32816, USA\\
E-mail: qiyu.sun@ucf.edu}

\date{\today}

\maketitle

%\author[1]{Yang Chen}
%\email{yang\_chenww123@163.com}
%\author[2]{Qiquan Fang}
%\email{fendui@yahoo.com}
%\author[3]{Qiyu Sun*}
%\email{qiyu.sun@ucf.edu}
%\address[1]{College of Mathematics and
%Computer Science,
%Hunan Normal University,
%Hunan 410081, P. R. China}
%\address[2]{School of Science,
%Zhejiang University of Science and Technology,
%Hangzhou, Zhejiang 310023, P. R. China}
%\address[3]{Department of Mathematics
%University of Central Florida
%Orlando, FL 32816, USA}
%% use optional labels to link authors explicitly to addresses:
%% \author[label1,label2]{}
%% \address[label1]{}
%% \address[label2]{}

\begin{abstract}
The Bochner-Riesz means are shown to have either the unit interval $[0,1]$ or the whole complex plane as their spectra on $L^p, 1\le p<\infty$

\end{abstract}

%\begin{keyword}
%Bochner-Riesz means %\sep  multiplier
% \sep spectra  \sep spectral invariance %
% \sep Wiener's lemma
%%\sep modulation-invariant space
%
%2010 \emph{Mathematics Subject Classification}: 42B20, 47L80  %; Secondary 42B20.
%%% keywords here, in the form: keyword \sep keyword
%
%%% PACS codes here, in the form: \PACS code \sep code
%
%%% MSC codes here, in the form: \MSC code \sep code
%% or \MSC[2008] code \sep code (2000 is the default)

%\end{keyword}

%\end{frontmatter}

\section{Introduction and Main Results}
Define Fourier transform $\hat f$ of
an integrable function $f$ by
$$\hat{f}(\xi):=\int_{\mathbb{R}^{d}}f(x)e^{-ix\xi}dx$$
and extend its definition to all tempered distributions as usual.
Consider Bochner-Riesz  means $B_\delta, \delta>0$, on $\mathbb{R}^{d}$,
\begin{equation*}%\label{br.def}
\widehat{B_{\delta}f}(\xi):=%m_{\delta}(\xi)\hat{f}(\xi)=
(1-|\xi|^{2})^{\delta}_{+}\hat{f}(\xi),
\end{equation*}
where $t_+:=\max (t, 0)$ for $t\in \mathbb{R}$ \cite{bochner36, grafakosbook,   steinbook}.
%steinweissbook}.
A famous conjecture in Fourier analysis  % on Bochner-Riesz means
 is that
  the Bochner-Riesz mean $B_{\delta}$ is bounded on $L^p:=L^p(\mathbb {R}^d)$, the space
 of all $p$-integrable functions on $\mathbb{R}^{d}$ with its norm denoted by $\|\cdot\|_p$, if and only if
\begin{equation}\label{criticalexponent.eq}
\delta> %\delta(p):=
\left(d\left|\frac{1}{p}-\frac{1}{2}\right|-\frac{1}{2}\right)_+, \ 1\le p<\infty.\end{equation}
The requirement \eqref{criticalexponent.eq} on the index $\delta$ is necessary for
$L^p$ boundedness of the Bochner-Riesz mean $B_\delta$ \cite{herz54}.
The sufficiency
is completely solved only for dimension two \cite{carlesonsjolin} and it is still open for high dimensions,
see \cite{bourgain91, lee04, stein58, taovargas00, taovargas00b, taovargasvega98, wolff95} and references therein
for recent advances.

 Denote the identity operator by $I$. For $\lambda\not \in [0, 1]$, we first show that Bochner-Reisz means $B_\delta$ is bounded on $L^p$ if and only if
its resolvents $(z I-B_\delta)^{-1}$, which are multiplier operators with symbols $(z- (1-|\xi|^{2})^{\delta}_{+})^{-1}$,
 are bounded on $L^p$ for all  $z\in \mathbb{C}\backslash  [0,1]$.

\begin{thm}\label{maintheorem} Let $\delta>0$ and $1\le p<\infty$.
 Then the following statements are equivalent to each other.
 \begin{itemize}

 \item[{\rm (i)}] The Bochner-Riesz mean $B_\delta$ is bounded on $L^p$.

 \item[{\rm (ii)}] $(z I-B_{\delta})^{-1}$ is  bounded on $L^p$ for all $z\in \mathbb{C}\backslash  [0,1]$.

 \item[{\rm (iii)}]  $(z_0 I-B_{\delta})^{-1}$ is  bounded on $L^p$ for some $z_0\in \mathbb{C}\backslash  [0,1]$.
 \end{itemize}
%its  spectrum   on ${\mathbf B}$ is contained in $[0, 1]$. %, i.e.,
%  there exists a positive constant $C$ such that
%   \begin{equation}\label{bounded.condition}
%   \|B_\delta f\|_{\mathbf B}\le C \|f\|_{\mathbf B} \end{equation}
%   for all $f\in {\mathbf B}$.
\end{thm}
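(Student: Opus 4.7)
The implication (ii)$\Rightarrow$(iii) is immediate. I plan to close the remaining equivalences by the cycle (i)$\Rightarrow$(ii)$\Rightarrow$(iii)$\Rightarrow$(i); only (i)$\Rightarrow$(ii) and (iii)$\Rightarrow$(i) need substantive work.

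For (i)$\Rightarrow$(ii), suppose $B_\delta$ is bounded on $L^p$ with operator norm $M$. The Neumann series $z^{-1}\sum_{k\ge 0}(B_\delta/z)^k$ converges in operator norm to $(zI-B_\delta)^{-1}$ for every $|z|>M$, giving the conclusion on the outer region. To reach every $z\in\mathbb{C}\setminus[0,1]$ I will run an analytic continuation: define $U:=\{z\in\mathbb{C}\setminus[0,1]:(zI-B_\delta)^{-1}\in\mathcal{B}(L^p)\}$, show $U$ is both open and closed in $\mathbb{C}\setminus[0,1]$, and conclude $U=\mathbb{C}\setminus[0,1]$ by connectedness. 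Openness is the standard local expansion $(zI-B_\delta)^{-1}=\sum_{k\ge 0}(w-z)^k(wI-B_\delta)^{-(k+1)}$, valid for $|z-w|<\|(wI-B_\delta)^{-1}\|^{-1}$ at any $w\in U$. Closedness reduces to a locally uniform bound on $\|(zI-B_\delta)^{-1}\|_{L^p\to L^p}$ over compact subsets of $\mathbb{C}\setminus[0,1]$; this is where the main technical work lies.

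For (iii)$\Rightarrow$(i), let $R:=(z_0I-B_\delta)^{-1}\in\mathcal{B}(L^p)$, with Fourier multiplier symbol $m(\xi)=1/(z_0-(1-|\xi|^2)_+^\delta)$ taking values in the compact arc $\Gamma:=\{1/(z_0-t):t\in[0,1]\}\subset\mathbb{C}\setminus\{0\}$. Since formally $B_\delta=z_0I-R^{-1}$, it suffices to show $R^{-1}\in\mathcal{B}(L^p)$. Injectivity of $R$ on $L^p$ is automatic since $m$ never vanishes. Dense range of $R$ on $L^p$ follows from injectivity of the adjoint $R^*=T_{1/(\bar z_0-(1-|\xi|^2)_+^\delta)}$, bounded on $L^{p'}$ by duality. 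To obtain bounded-belowness of $R$ --- equivalently, boundedness of $R^{-1}$ --- I intend to construct $R^{-1}$ directly as a norm limit of polynomials in $R$: since $0\notin\Gamma$ and $\mathbb{C}\setminus\Gamma$ is connected, Runge's theorem yields polynomials $P_n$ with $P_n(w)\to z_0-1/w$ uniformly on $\Gamma$, and the $P_n(R)$ are bounded on $L^p$; the issue is controlling $\|P_n(R)\|_{L^p\to L^p}$ so that the limit exists in operator norm.

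The central obstacle, common to both directions, is the transition from uniform convergence of symbols to operator-norm convergence in the Fourier multiplier algebra $M_p$, which for $p\ne 2$ is not automatic. The structural feature I plan to exploit is the identity $((1-|\xi|^2)_+^\delta)^n=(1-|\xi|^2)_+^{n\delta}$, so that $B_\delta^n=B_{n\delta}$ with $\|B_\delta^n\|_{L^p\to L^p}\le\|B_\delta\|_{L^p\to L^p}^n$, and analogously for powers of the resolvent multiplier. This multiplicative structure should provide the quantitative Neumann-type control needed for the closedness of $U$ in (i)$\Rightarrow$(ii) and for the polynomial-in-$R$ approximation in (iii)$\Rightarrow$(i); ultimately both reduce to showing that the $L^p$ spectrum of the relevant translation-invariant operator coincides with the essential range of its symbol, a statement specific to the Bochner-Riesz family.
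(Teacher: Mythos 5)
Your outline correctly isolates the crux --- that uniform convergence of symbols does not give convergence in the $L^p$ multiplier norm for $p\neq 2$ --- but at both places where this crux arises you defer rather than resolve it, and these are exactly the points where the theorem lives. In (i)$\Rightarrow$(ii), the ``closedness of $U$'' you need is equivalent to showing that no point of $\mathbb{C}\setminus[0,1]$ lies on the boundary of the $L^p$ spectrum, i.e.\ it is essentially the statement being proved; connectedness plus the Neumann series for $|z|>\|B_\delta\|_{p\to p}$ cannot reach $z$ with $|z|\le 1$ near the interval, and the identity $B_\delta^n=B_{n\delta}$ with $\|B_\delta^n\|\le\|B_\delta\|^n$ gives no decay that would make $\sum z^{-n-1}B_\delta^n$ converge there. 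In (iii)$\Rightarrow$(i), Runge approximation of $z_0-1/w$ on the arc $\Gamma$ gives polynomials $P_n(R)$ bounded on $L^p$ individually, but you have no mechanism to control $\|P_n(R)\|_{p\to p}$, and none is available from uniform convergence on $\Gamma$ alone. So both substantive implications are left open.

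The missing idea, which is how the paper closes the gap, is a \emph{spatial localization of the symbol to a thin annulus around the unit sphere}, chosen so that the problematic series has a geometrically small ratio there. Concretely, fix $r_0$ with $(2r_0)^\delta<|z|/2$ and split $m(\xi)=(z-(1-|\xi|^2)_+^\delta)^{-1}$ by a smooth partition into: a piece supported in $\{|\xi|\le 1-r_0/2\}$, where $(1-|\xi|^2)^\delta$ is smooth so the piece is a $C_c^\infty$ symbol and hence a Schwartz convolution kernel; the piece $z^{-1}(1-\psi_1-\psi_2)$, likewise harmless; and a piece supported in the annulus $\{1-r_0\le|\xi|\le 1+r_0\}$, where $(1-|\xi|^2)_+^\delta\le(2r_0)^\delta<|z|/2$, so the expansion $m_2=z^{-1}\sum_{n\ge0}z^{-n}((1-|\xi|^2)_+^\delta)^n\psi_2$ converges with ratio $<1/2$. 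Its first $N_0$ terms are $z^{-n-1}(B_\delta)^n\Psi_2$, bounded by hypothesis (i); its tail has inverse Fourier transforms $K_n$ with $\|K_n\|_{\mathcal{S}_{\alpha_0,\beta_0}}\le Cn^{\alpha_0}(2r_0)^{n\delta}$, so the tail kernel converges absolutely in a Schwartz seminorm that controls the $L^p$ convolution norm. This is the quantitative bridge from symbol smallness to operator-norm smallness that your plan lacks. The same device, run with the series $t=-z_0\sum_{n\ge1}\bigl(1-z_0(z_0-t)^{-1}\bigr)^n$ in powers of the resolvent symbol restricted to the annulus, gives (iii)$\Rightarrow$(i) directly, with no need for Runge's theorem, injectivity/dense-range considerations, or an analytic-continuation argument.
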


 It is obvious that $\lambda I-B_\delta, \lambda\in [0, 1]$, does not have bounded inverse on $L^2$,
 as it is a multiplier with symbol $\lambda-(1-|\xi|^2)^\delta_+$.
  In the next theorem, we show that $\lambda I-B_\delta, \lambda\in [0, 1]$, does not have bounded inverse on $L^p$ for all $1\le p<\infty$.

\begin{thm}\label{maintheorem1} Let $\delta>0,  1\le p<\infty$ and $\lambda\in [0,1]$.
Then
$$\inf_{\|f\|_p=1} \|(\lambda I-B_\delta) f\|_p=0.$$
\end{thm}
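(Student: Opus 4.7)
The plan is to exhibit, for each $\lambda \in [0,1]$, an $L^p$-normalized sequence $\{f_n\}$ along which $(\lambda I - B_\delta) f_n \to 0$ in $L^p$. The ansatz is a modulated Schwartz bump
$$f_n(x) := n^{-d/p} e^{i \xi_0 \cdot x}\, \phi(x/n),$$
where $\phi \in \mathcal{S}(\mathbb{R}^d)$ is fixed with $\hat\phi$ smooth, compactly supported in a small ball, and $\|\phi\|_p = 1$, and $\xi_0 \in \mathbb{R}^d$ is chosen so that the multiplier $m(\xi) := (1-|\xi|^2)^\delta_+$ satisfies $m(\xi_0) = \lambda$. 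A direct computation gives $\|f_n\|_p = 1$ and $\hat f_n$ is concentrated in a $C/n$-neighborhood of $\xi_0$, so $f_n$ behaves as an approximate eigenvector of $B_\delta$ with eigenvalue $m(\xi_0) = \lambda$.

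The choice of $\xi_0$ relative to the unit sphere, where $m$ is only Hölder continuous, is decisive. For $\lambda = 0$ I take any $\xi_0$ with $|\xi_0| > 1$; for all sufficiently large $n$ the support of $\hat f_n$ avoids the closed unit ball, so $m \hat f_n \equiv 0$ and hence $(\lambda I - B_\delta)f_n = -B_\delta f_n = 0$. For $\lambda \in (0,1]$ I take $|\xi_0| = \sqrt{1-\lambda^{1/\delta}} < 1$, where $m$ is $C^\infty$ in a neighborhood. After the substitution $\eta = n(\xi - \xi_0)$ in the Fourier inversion formula, a Taylor expansion at $\xi_0$,
$$m(\xi_0 + \eta/n) - \lambda = n^{-1} \nabla m(\xi_0) \cdot \eta + R_n(\eta),$$
with $R_n$ and all its derivatives $O(n^{-2})$ uniformly on the support of $\hat\phi$, splits $(\lambda I - B_\delta)f_n(x)$ into a principal term proportional to $n^{-d/p-1} e^{ix\xi_0} \nabla m(\xi_0) \cdot \nabla\phi(x/n)$ and a remainder of the form $n^{-d/p} e^{ix\xi_0} \mathcal{F}^{-1}(R_n \hat\phi)(x/n)$. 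Taking $L^p$ norms, the dilation factor $n^{d/p}$ exactly cancels the prefactor $n^{-d/p}$, leaving contributions of order $O(n^{-1})$ and $O(n^{-2})$ respectively.

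The main obstacle is that $p$ is arbitrary in $[1,\infty)$, so Plancherel is not available to deduce smallness from the concentration of $\hat f_n$ at a zero of $\lambda - m$. The construction circumvents this by reducing all relevant $L^p$ norms to those of fixed Schwartz templates (namely $\nabla\phi$, and the inverse Fourier transform of a $C_c^\infty$ function whose $C^k$ norms are $O(n^{-2})$), for which the required $L^p$ decay follows by elementary integration-by-parts estimates on the inverse Fourier transform of smooth compactly supported functions. The lack of smoothness of $m$ on the unit sphere is neatly sidestepped by choosing $\xi_0$ strictly inside the ball when $\lambda > 0$ and strictly outside when $\lambda = 0$.
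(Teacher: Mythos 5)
Your proposal is correct and follows essentially the same route as the paper: both construct approximate eigenfunctions whose Fourier transforms concentrate at a point $\xi_0$ with $(1-|\xi_0|^2)_+^\delta=\lambda$, chosen strictly inside the unit ball for $\lambda\in(0,1]$ (so the symbol is smooth there) and outside it for $\lambda=0$. The only difference is how the smallness of $\|((\lambda-m)\widehat{f_n})^\vee\|_p$ is quantified --- you Taylor-expand the symbol on the frequency side to get $O(n^{-1})$, while the paper exploits the vanishing of $\int K$ on the physical side by writing $K*f_N(x)=\int K(x-y)(f_N(y)-f_N(x))\,dy$, obtaining $O(N^{-1/2})$ --- and both work.
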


For any $\delta>0$,
define the spectra of Bochner-Riesz mean  $B_{\delta}$ on $L^p$ by
$$\sigma_p(B_\delta):=\mathbb{C}\backslash \{z\in \mathbb{C}, \ zI-B_\delta \ {\rm has\ bounded\ inverse\ on} \ L^p\}.$$
As  Bochner-Riesz means $B_\delta$ are multiplier operators
with symbols $(1-|\xi|^2)_+^\delta$, we have
\begin{equation*}
\sigma_p(B_\delta)={\rm closure\ of}\ \{(1-|\xi|^2)_+^\delta, \ \xi\in \mathbb{R}^d\}=[0,1] \ {\rm for} \ p=2.
\end{equation*}

The equivalence in Theorem \ref{maintheorem} and  stability in Theorem \ref{maintheorem1} may not help to solve the conjecture on Bochner-Riesz means, but they imply that for any $\delta>0$,
the spectra
of  the Bochner-Riesz mean  $B_{\delta}$ on $L^p$
is invariant for different $1\le p<\infty$ whenever it is bounded on $L^p$.
%Applying Theorems \ref{maintheorem} and \ref{maintheorem1},
%we have the following result on the spectra of the Bochner-Riesz means $ \sigma_p(B_\delta)$ for $p\ne 2$.

\begin{thm}\label{maincor1} Let $\delta>0$ and $1\le p<\infty$. Then
\begin{itemize}
\item[{(i)}]$\sigma_p(B_\delta)=[0,1]$ if the Bochner-Riesz mean  $B_{\delta}$ is bounded on $L^p$; and

\item[{(ii)}] $\sigma_p(B_\delta)=\mathbb{C}$ if   $B_{\delta}$ is unbounded on $L^p$.
\end{itemize}
\end{thm}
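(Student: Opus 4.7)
The plan is to derive Theorem \ref{maincor1} directly from the two preceding results. Theorem \ref{maintheorem1} will pin down the inclusion $[0,1]\subset\sigma_p(B_\delta)$ regardless of whether $B_\delta$ is bounded on $L^p$, while Theorem \ref{maintheorem} will decide whether the spectrum collapses to $[0,1]$ or swells to all of $\mathbb{C}$.

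First I would establish the universal inclusion $[0,1]\subset\sigma_p(B_\delta)$. Fix $\lambda\in[0,1]$ and suppose for contradiction that $\lambda I-B_\delta$ has a bounded inverse $S$ on $L^p$. Then for every $f$ in the natural domain with $\|f\|_p=1$ one has
\[
1=\|f\|_p=\|S(\lambda I-B_\delta)f\|_p\le\|S\|\,\|(\lambda I-B_\delta)f\|_p,
\]
so that $\inf_{\|f\|_p=1}\|(\lambda I-B_\delta)f\|_p\ge\|S\|^{-1}>0$, contradicting Theorem \ref{maintheorem1}. Hence $\lambda\in\sigma_p(B_\delta)$, giving $[0,1]\subset\sigma_p(B_\delta)$ in both parts of the theorem.

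Next I would split into the two cases. For part (i), assuming $B_\delta$ is bounded on $L^p$, the implication (i)$\Rightarrow$(ii) of Theorem \ref{maintheorem} guarantees that $(zI-B_\delta)^{-1}$ is bounded on $L^p$ for every $z\in\mathbb{C}\setminus[0,1]$; this gives $\sigma_p(B_\delta)\subset[0,1]$, and combining with the first inclusion yields $\sigma_p(B_\delta)=[0,1]$. For part (ii), if $B_\delta$ is unbounded on $L^p$, the contrapositive of (iii)$\Rightarrow$(i) in Theorem \ref{maintheorem} shows that $(zI-B_\delta)^{-1}$ fails to be bounded on $L^p$ for every single $z\in\mathbb{C}\setminus[0,1]$; therefore $\mathbb{C}\setminus[0,1]\subset\sigma_p(B_\delta)$, which together with step one yields $\sigma_p(B_\delta)=\mathbb{C}$.

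No step presents a genuine obstacle; the theorem is essentially a bookkeeping consequence of the two previous results. The only small point requiring care is converting the approximation statement $\inf_{\|f\|_p=1}\|(\lambda I-B_\delta)f\|_p=0$ of Theorem \ref{maintheorem1} into the non-invertibility of $\lambda I-B_\delta$ on $L^p$, which is handled by the short contradiction argument above.
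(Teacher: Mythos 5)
Your proposal is correct and follows exactly the route the paper intends: the paper states Theorem \ref{maincor1} as an immediate consequence of Theorems \ref{maintheorem} and \ref{maintheorem1}, and your derivation (the lower-bound contradiction turning the infimum statement into non-invertibility for $\lambda\in[0,1]$, then using (i)$\Rightarrow$(ii) for part (i) and the contrapositive of (iii)$\Rightarrow$(i) for part (ii)) is precisely the bookkeeping the authors leave implicit. No issues.
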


The above spectral invariance  on  different $L^p$ spaces
 holds for any multiplier operator $T_m$ with its bounded symbol $m$ satisfying the following hypothesis,
 \begin{equation}\label{mikhlin.req}
 |\xi|^k |\nabla^k m(\xi)|\in L^\infty,\  0\le k\le d/2+1,\end{equation}
  in  the classical
Mikhlin multiplier theorem,
because in this case,
$$\sigma_2(T_m)={\rm closure\ of} \ \{ m(\xi), \ \xi\in \mathbb{R}^d\},$$
and for any $z\not\in \sigma_2(T_m)$, the inverse of  $z I-T_m$ is a multiplier operator with
symbol  $(z-m(\xi))^{-1}$ satisfying  \eqref{mikhlin.req} too.
%Let m be a bounded function on Rn which is smooth except possibly at the origin, and such that the function \scriptstyle |x|^k |\nabla^k m| is bounded for all integers \scriptstyle 0 \leq k \leq n/2+1: then m is an Lp multiplier for all 1 < p < ��.
Inspired by the above  spectral invariance for  Bochner-Riesz means and Mikhlin multipliers,
 we propose the following problem:
 Under what conditions on symbol  of a multiplier,
does  the corresponding operator have its spectrum on $L^p$ independent on $1\le p<\infty$.

Spectral invariance for different function spaces is  closely related to algebra of singular integral operators \cite{calderon56, farrell10, kurbatov01, sunacha08}
and Wiener's lemma for infinite matrices \cite{grochenigtams06, sunca11, suntams07}.
   It has been established for singular integral operators with kernels being H\"older continuous and having certain off-diagonal decay
\cite{barnes90, farrell10, fss14, ssjfa09, sunacha08}, but it is not well studied yet for Calderon-Zygmund operators, oscillatory integrals, % hypersingular integral operators
 and  many other linear operators  in Fourier analysis.

\smallskip

In this paper, we denote by ${\mathcal S}$  and ${\mathcal D}$
 the space of Schwartz functions and compactly supported $C^\infty$ functions respectively, and we use
 the capital letter $C$ to denote
   an absolute constant that could be different at each occurrence.

\section{Proof of Theorem \ref{maintheorem}}
\label{proof.section}

 Given nonnegative integers $\alpha_0$ and $\beta_0$, let
${\mathcal S}_{\alpha_0, \beta_0}$ contain all functions $f$
with  %$\|g\|_{{\mathcal S}_{\alpha_0, \beta_0}}<\infty$, where
$$\|f\|_{{\mathcal S}_{\alpha_0, \beta_0}}
:=\sum_{|\alpha|\le \alpha_0, |\beta|\le \beta_0}\|x^\alpha \partial^\beta f(x) \|_\infty<\infty.$$
%Then the space ${\mathcal S}$ of Schwartz functions is the intersection of ${\mathcal S}_{\alpha_0, \beta_0}, \alpha_0, \beta_0\ge 0$.
  %and it has $\|\cdot\|_{{\mathcal S}_{\alpha_0, \beta_0}}$ as its semi-norms.
In this section, we prove the following strong version of Theorem \ref{maintheorem}. %The inclusion \eqref{maintheorem.eq21} follows from the following theorem.

\begin{thm}\label{theorem1}
 Let  ${\mathbf B}$ be a Banach space of tempered distributions
 with $\mathcal{S}$ being dense in ${\mathbf B}$. %\subset\mathbf{B}\subset\mathcal{S'}$.
Assume that
there exist nonnegative integers $\alpha_0$ and $\beta_0$ such that any convolution operator with kernel $K\in {\mathcal S}_{\alpha_0, \beta_0}$
is bounded on ${\mathbf B}$,  % i.e.,
\begin{equation}\label{space.property1} \|K\ast f\|_{\mathbf{B}}\leq C\|K\|_{{\mathcal S}_{\alpha_0, \beta_0}}\|f\|_{\mathbf{B}} \ \ {\rm for \ all}\  f\in \mathbf{B}.
\end{equation}
%where $C$ is an absolute constant.
 Then the following statements are equivalent to each other.
 \begin{itemize}

 \item[{\rm (i)}] The Bochner-Riesz mean $B_\delta$ is bounded on ${\mathbf B}$.

 \item[{\rm (ii)}] $(z I-B_{\delta})^{-1}$ is  bounded on $\mathbf{B}$ for all $z\in \mathbb{C}\backslash  [0,1]$.

 \item[{\rm (iii)}]  $(z_0 I-B_{\delta})^{-1}$ is  bounded on $\mathbf{B}$ for some $z_0\in \mathbb{C}\backslash  [0,1]$.
 \end{itemize}
%its  spectrum   on ${\mathbf B}$ is contained in $[0, 1]$. %, i.e.,
%  there exists a positive constant $C$ such that
%   \begin{equation}\label{bounded.condition}
%   \|B_\delta f\|_{\mathbf B}\le C \|f\|_{\mathbf B} \end{equation}
%   for all $f\in {\mathbf B}$.
\end{thm}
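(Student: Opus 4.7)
The strategy is to prove (i)$\Rightarrow$(ii)$\Rightarrow$(iii)$\Rightarrow$(i); only the first and last links require work, and both rest on the same device: an explicit polynomial-plus-remainder identity relating the Bochner--Riesz symbol $m(\xi):=(1-|\xi|^{2})_{+}^{\delta}$ and the resolvent symbol $\phi_{z}(\xi):=(z-m(\xi))^{-1}$, combined with a kernel estimate that brings the remainder within the reach of hypothesis \eqref{space.property1}.

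For (i)$\Rightarrow$(ii), fix $z\in\mathbb{C}\setminus[0,1]$. Because $m(\xi)\in[0,1]$ and $z\notin[0,1]$, the geometric-series identity
\begin{equation*}
\frac{1}{z-m(\xi)}=\sum_{n=0}^{N}\frac{m(\xi)^{n}}{z^{n+1}}+\frac{m(\xi)^{N+1}}{z^{N+1}(z-m(\xi))}
\end{equation*}
holds pointwise, and translates into the multiplier identity
\begin{equation*}
(zI-B_{\delta})^{-1}=\sum_{n=0}^{N}z^{-(n+1)}B_{\delta}^{n}+T_{r_{N,z}},\qquad r_{N,z}(\xi):=\frac{m(\xi)^{N+1}}{z^{N+1}(z-m(\xi))}.
\end{equation*}
For (iii)$\Rightarrow$(i), set $R:=(z_{0}I-B_{\delta})^{-1}$ (with symbol $\phi_{z_{0}}$) and expand $m=z_{0}-\phi_{z_{0}}^{-1}$ geometrically about the value $\phi_{z_{0}}(\xi)=1/z_{0}$ taken on $\{|\xi|\ge 1\}$; a short computation gives the exact identity
\begin{equation*}
m(\xi)=\sum_{n=1}^{N}(-1)^{n-1}z_{0}^{n+1}\bigl(\phi_{z_{0}}(\xi)-z_{0}^{-1}\bigr)^{n}+s_{N,z_{0}}(\xi),\qquad s_{N,z_{0}}(\xi):=\frac{(-1)^{N}m(\xi)^{N+1}}{(z_{0}-m(\xi))^{N}},
\end{equation*}
whence $B_{\delta}=\sum_{n=1}^{N}(-1)^{n-1}z_{0}^{n+1}(R-z_{0}^{-1}I)^{n}+T_{s_{N,z_{0}}}$. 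In each direction the leading sum is bounded on $\mathbf{B}$ by the assumed hypothesis, so the argument reduces to showing that the single remainder operator $T_{r_{N,z}}$ (resp.\ $T_{s_{N,z_{0}}}$) is bounded on $\mathbf{B}$ once $N$ is chosen large enough.

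This kernel estimate is the main obstacle. Both remainder symbols are supported in $\{|\xi|\le 1\}$ and factor as $m(\xi)^{N+1}h(m(\xi))$, where $h$ is $C^{\infty}$ on a neighborhood of $[0,1]$ (since $z,z_{0}\notin[0,1]$). The key input is that $(1-|\xi|^{2})_{+}^{\beta}\in C^{\lfloor\beta\rfloor}(\mathbb{R}^{d})$, so fixing $N$ with $\delta(N+1)>\alpha_{0}$ places both remainders in $C^{\alpha_{0}}$ with compact support in the closed unit ball. The convolution kernel $K$ of the corresponding multiplier is the inverse Fourier transform of this symbol; integration by parts in the Fourier inversion integral then yields $\|x^{\alpha}\partial^{\beta}K\|_{\infty}<\infty$ for all $|\alpha|\le\alpha_{0}$ and $|\beta|\le\beta_{0}$, so $K\in\mathcal{S}_{\alpha_{0},\beta_{0}}$. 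Hypothesis \eqref{space.property1} then delivers $\mathbf{B}$-boundedness of the remainder operator.

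All the symbol-level identities above hold as operator identities on the dense subspace $\mathcal{S}$; once every term has been shown to be bounded on $\mathbf{B}$, they extend from $\mathcal{S}$ to $\mathbf{B}$ by density, establishing boundedness of $(zI-B_{\delta})^{-1}$ for (i)$\Rightarrow$(ii) and of $B_{\delta}$ for (iii)$\Rightarrow$(i). The implication (ii)$\Rightarrow$(iii) is trivial.
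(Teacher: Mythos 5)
Your proposal is correct, and it executes the key step differently from the paper. Both arguments share the same engine: expand the resolvent symbol $(z-m(\xi))^{-1}$ (resp.\ the symbol $m(\xi)=(1-|\xi|^2)_+^\delta$) geometrically, absorb finitely many terms into the assumed bounded operator, and control what is left via a convolution kernel in $\mathcal{S}_{\alpha_0,\beta_0}$, the regularity coming from the vanishing of $m^{N+1}$ to order $(N+1)\delta>\alpha_0$ at the unit sphere. The paper, however, first localizes with a partition of unity $\psi_1+\psi_2$ to a thin annulus $\{1-r_0\le|\xi|\le 1+r_0\}$ around the singular sphere, and then sums an \emph{infinite} tail $\sum_{n>N_0}z^{-n}K_n$, whose convergence in $\mathcal{S}_{\alpha_0,\beta_0}$ requires the smallness $(2r_0)^\delta<|z|/2$ forced by the choice of $r_0$; the smooth interior piece $m\psi_1$ must then be handled separately. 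You instead use the exact finite identities
$(z-m)^{-1}=\sum_{n=0}^{N}m^n z^{-n-1}+m^{N+1}z^{-N-1}(z-m)^{-1}$ and (as one checks with $w=m/(z_0-m)$, $1+w=z_0/(z_0-m)$) $m=\sum_{n=1}^{N}(-1)^{n-1}z_0^{n+1}(\phi_{z_0}-z_0^{-1})^n+(-1)^N m^{N+1}(z_0-m)^{-N}$, which hold globally with a single closed-form remainder of the shape $g(m)$, $g$ smooth near $[0,1]$ and vanishing to order $N+1$ at $0$; this removes both the cutoffs and the convergence-of-series issue, at the price of one kernel estimate for a symbol supported on the whole closed ball rather than a thin annulus. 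Two small points to tighten: the assertion $(1-|\xi|^2)_+^\beta\in C^{\lfloor\beta\rfloor}$ fails for integer $\beta$ (e.g.\ $u_+^2$ is only $C^1$), but your actual use is safe because $(N+1)\delta>\alpha_0$ is strict; and the claim that the full remainder $m^{N+1}h(m)$ lies in $C^{\alpha_0}$ deserves the one-line chain-rule observation that each differentiation of $G(u)=u_+^{(N+1)\delta}\tilde h(u_+^\delta)$ costs at most one power of $u_+$, so all derivatives up to order $\alpha_0$ stay bounded --- the same verification the paper leaves implicit in its estimate $\|K_n\|_{\mathcal{S}_{\alpha_0,\beta_0}}\le Cn^{\alpha_0}(2r_0)^{n\delta}$.
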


\begin{proof} %Take $z\in \mathbb{C}\backslash  [0,1]$
%and
 (i)$\Longrightarrow$(ii).\ Take $z\in \mathbb{C}\backslash  [0,1]$ and
 \begin{equation}\label{theorem1.pf.eq1} r_{0}\in (0, \min(|z/2|^{1/\delta}, 1)/2).\end{equation}
 Let $\psi_1$  and $\psi_2\in {\mathcal D}$ satisfy
\begin{equation} \label{theorem1.pf.eq2}
\psi_{1}(\xi)=1\ {\rm  when} \  |\xi|\leq1-r_{0},\
 \psi_{1}(\xi) =0\ {\rm  when} \   |\xi|\geq1-r_{0}/2;
\end{equation}
and
\begin{equation} \label{theorem1.pf.eq3}
\psi_{2}(\xi)=1-\psi_1(\xi) \ {\rm  if}\  |\xi|\leq 1+r_{0}/2, \
\psi_{2}(\xi)=0\ {\rm  if} \  |\xi|>1+r_{0}.
\end{equation}

Define
$m(\xi):=(z-(1-|\xi|^{2})^{\delta}_{+})^{-1}$, $m_1(\xi):=m(\xi) \psi_1(\xi)$
 and $m_2(\xi):=m(\xi) \psi_2(\xi)$.
Then
$m(\xi)$ is the symbol of the multiplier operator $(z I-B_{\delta})^{-1}$
and
\begin{equation*}
m(\xi)=m_1(\xi)+m_2(\xi)+ z^{-1}(1-\psi_1(\xi)-\psi_2(\xi)).
\end{equation*}
As $m_1, \psi_1, \psi_2\in {\mathcal D}$,
 multiplier operators with symbols $m_1$ and $ \psi_1+\psi_2$
are bounded  on ${\bf B}$ by
 \eqref{space.property1}.
Therefore the proof reduces to  establishing the boundedness of the multiplier operator with symbol $m_2$,
\begin{equation}\label{m2.eq} \|(m_2\hat f)^\vee\|_{\mathbf B}\le C \|f\|_{\mathbf B}, \ f\in {\mathbf B},
\end{equation}
where $f^\vee$ is the inverse Fourier transform of $f$.

Take an integer $N_{0}>\alpha_0/\delta$.
Write
\begin{equation*}
m_{2}(\xi) =z^{-1} \Big(\sum^{N_{0}}_{n=0}+\sum_{n=N_0+1}^\infty\Big)(z^{-1})^{n}
((1-|\xi|^{2})^{\delta}_{+})^{n}\psi_{2}(\xi)%\nonumber\\
%& & +z^{-1}\displaystyle\sum^{\infty}_{n=N_{0}}(z^{-1})^{n}((1-|\xi|^{2})^{\delta}_{+})^{n}\psi_{2}(\xi)\\
=:m_{21}(\xi)+m_{22}(\xi),
\end{equation*}
and denote  multiplier operators  with symbols $m_{21}$ and $m_{22}$ by $T_{21}$ and $T_{22}$ respectively.
Observe that
$$T_{21}=z^{-1} \Psi_2+\sum_{n=1}^{N_0}z^{-n-1} (B_\delta)^n\Psi_2 ,$$
where $\Psi_2$ is the multiplier operator with symbol $\psi_2$.
Then $T_{21}$ is bounded on ${\bf B}$ by  \eqref{space.property1}
and  the boundedness  assumption (i), % for the Bochner-Riesz mean $B_\delta$, %\eqref{bounded.condition},
%and the boundedness assumption for the Bochner-Riesz mean $B_\delta$ on ${\mathbf B}$,
\begin{equation}\label{bounded.condition2}
\|T_{21} f\|_{\mathbf B}\le C \|f\|_{\mathbf B}\quad {\rm for \ all} \ f\in {\mathbf B}.
\end{equation}

Recall that $\psi_2\in {\mathcal D}$ is supported on $\{\xi, 1-r_0\le |\xi|\le 1+r_0\}$.
 Then  the inverse Fourier transform $K_n$ of $(1-|\xi|^{2})^{n\delta}_{+}\psi_{2}(\xi)$
  %belong to ${\mathcal S}_{\alpha_0, \beta_0}$, and
  satisfies
\begin{equation*}
\| K_n\|_{{\mathcal S}_{\alpha_0, \beta_0}}\le C
n^{\alpha_0} (2r_0)^{n\delta}, \ n\ge N_0+1.
\end{equation*}
 Therefore the convolution  kernel
%Recall that
%$(2r_0)^\delta |z|^{-1}<1$ and $N_0\delta >\beta_0$.
 $$K(x):=z^{-1}\sum^{\infty}_{n=N_{0}+1}z^{-n}K_{n}(x)$$ %\in {\mathcal S}_{\alpha_0, \beta_0}$$
 of  $T_{22}$ belongs to ${\mathcal S}_{\alpha_0, \beta_0}$.
%and it satisfies
%$$\|K\|_{{\mathcal S}_{\alpha_0, \beta_0}}
%\le  C \sum_{n=N_0+1}^\infty n^{\beta_0}
%|z|^{-n} (2r_0)^{n\delta}  <\infty.$$
This together with \eqref{space.property1} proves
\begin{equation}\label{t22.boundedness}
\|T_{22} f\|_{\mathbf B}\le C \|f\|_{\mathbf B}\quad {\rm for \ all} \ f\in {\mathbf B}.\end{equation}

Combining \eqref{bounded.condition2} and \eqref{t22.boundedness} proves  \eqref{m2.eq} and hence  completes the proof of the implication (i)$\Longrightarrow$(ii).

\smallskip

(ii)$\Longrightarrow$(iii).\  The implication is obvious.

\smallskip

(iii)$\Longrightarrow$(i).\  %Take  $z_0\in \mathbb{C}\backslash  [0,1]$ so that $(z_0 I-B_{\delta})^{-1}$ is  bounded on $\mathbf{B}$.
  Let  $z_0\in \mathbb{C}\backslash  [0,1]$ so that $(z_0 I-B_{\delta})^{-1}$ is  bounded on $\mathbf{B}$,
  $r_0$ be as in \eqref{theorem1.pf.eq1} with $z$ replaced by $z_0$, and let $\psi_1, \psi_2$ be given in \eqref{theorem1.pf.eq2} and \eqref{theorem1.pf.eq3} respectively.
Following the argument used in the proof of the implication (i)$\Longrightarrow$(ii),  we see that it suffices to prove the  operator $T_3$ associated with the multiplier
$m_3(\xi):= (1-|\xi|^2)^\delta_+ \psi_2(\xi)$
is bounded on ${\bf B}$.
Take an integer $N_0>\alpha_0/\delta$ and write
$$m_3(\xi)=-z_0 \Big(\sum_{n=1}^{N_0}+\sum_{n=N_0+1}^\infty\Big) \big(1-z_0 (z_0-(1-|\xi|^2)_+^\delta)^{-1}\big)^n \psi_2(\xi)=:m_{31}(\xi)+m_{32}(\xi),$$
where  the series is convergent since
$$|1-z_0 (z_0-(1-|\xi|^2)_+^\delta)^{-1}|\le \sum_{n=1}^\infty (|z_0|^{-1} (1-|\xi|^2)_+^\delta)^n\le \sum_{n=1}^\infty
(|z_0|^{-1} (2r_0)^\delta)^n
<1.$$
Denote by $T_{31}$ and $T_{32}$ the operators associated with multiplier $m_{31}$ and $m_{32}$ respectively.
As  $T_{31}$ is  a linear combination of $\Psi_2$ and $(z_0 I-B_{\delta})^{-n}\Psi_2, 1\le n\le N_0$,
it is bounded on ${\bf B}$,
\begin{equation}\label{theorem1.pf.100}
\|T_{31} f\|_{\bf B}\le C \|f\|_{\mathbf B} \ \  {\rm for \ all} \ f\in {\bf B},
\end{equation}
by \eqref{space.property1} and  the boundedness assumption (iii).

Define the inverse Fourier transform of $\big(1-z_0 (z_0-(1-|\xi|^2)_+^\delta)^{-1}\big)^n \psi_2(\xi), n>N_0$, by $\tilde K_n$.
One may verify that
\begin{equation*}
\| \tilde K_n\|_{{\mathcal S}_{\alpha_0, \beta_0}}\le C
n^{\alpha_0} (2r_0)^{n\delta} |z_0|^{-n}, \ n\ge N_0+1.
\end{equation*}
Therefore
\begin{equation}\label{theorem1.pf.101}
\|T_{32} f\|_{\bf B}\le  C \sum_{n=N_0+1}^\infty
\| \tilde K_n\|_{{\mathcal S}_{\alpha_0, \beta_0}} \|f\|_{\bf B} \le C \Big(\sum_{n=N_0+1}^\infty n^{\alpha_0} (2r_0)^{n\delta}|z_0|^{-n}\Big)\|f\|_{\bf B}
\end{equation}
for all $f\in {\mathbf B}$. Combining \eqref{theorem1.pf.100} and \eqref{theorem1.pf.101} completes the proof.
\end{proof}

\section{Proof of Theorem \ref{maintheorem1}}
\label{proof.section}
Let $f$  and  $K$ be  Schwartz functions with $f(0)=1$ and $\hat K(0)=0$, and set  $f_N(x)=N^{-d} f(x/N), N\ge 1$.
Then  for any  positive integer $\alpha\ge  d+1$
there exists a constant $C_\alpha$ such that
\begin{eqnarray}\label{cond2.verf}
|K*f_N(x)| & \leq &
\Big(\int_{|x-y|>\sqrt{N}}+\int_{|x-y|\le \sqrt{N}}\Big) |K(x-y)|\ |f_N(y)-f_N(x)| dy\nonumber\\
&\le & \int_{|x-y|> \sqrt{N}} |K(x-y)| \ |f_N(y)| dy+ C_\alpha N^{-d-1/2} (1+|x/N|)^{-\alpha}\nonumber\\
&\le & C_\alpha N^{-1/2}
\Big( \int_{\mathbb{R}^d} (1+|x-y|)^{-\alpha} |f_N(y)| dy+  N^{-d} (1+|x/N|)^{-\alpha}\Big).\nonumber\\
\end{eqnarray}
  This implies that
$$\lim_{N\to \infty}
\frac{\|K*f_N\|_p}{\|f_N\|_p}=0, \ 1\le p<\infty.
$$
%As the spectrum  of a bounded operator is a closed set,
Therefore the following is a strong version of Theorem \ref{maintheorem1}.
% the inclusion
%\eqref{maintheorem.eq22} holds by the following theorem.

\begin{thm}\label{theorem2}
 Let  ${\mathbf B}$ be a Banach space of tempered distributions
 with $\mathcal{S}$ being dense in ${\mathbf B}$.
 Assume that \eqref{space.property1} holds for some $\alpha_0, \beta_0\ge 0$ and  that for any $\xi_0\in \mathbb{R}^d$
  there exists $\varphi_0\in {\mathcal D}$ such that
 $\widehat \varphi_0(0)=1$ and
  \begin{equation}\label{theorem2.eq1}
  \lim_{N\to\infty}\frac{\|(m \widehat {f_{N, \xi_0}})^\vee\|_{\mathbf{B}}}
{\|f_{N, \xi_0}\|_{\mathbf{B}}}=0
 \end{equation}
 for  all  Schwartz functions $m$ with $m(\xi_0)=0$, where
%$f_{N, \xi_0}$ with help of Fourier transform by
 $\widehat {f_{N, \xi_0}}(\xi)=\varphi_0(N(\xi-\xi_0))$.
%If the Bochner-Riesz mean $B_\delta$ is bounded on ${\mathbf B}$, then
 Then \begin{equation}
\label{theorem2.pf.eq1}\inf_{f\ne 0} \frac{\|(\lambda I-B_{\delta}) f\|_{\mathbf{B}}}{\|f\|_{\mathbf{B}}}=0 \quad {\rm for \ all} \ \lambda\in [0,1]. \end{equation}
%
%the unit interval $[0,1]$ is contained in the spectrum %$\sigma_{\mathbf{B}}(B_{\delta})$
%   of the Bochner-Riesz mean $B_\delta$ on ${\mathbf B}$ whenever .
\end{thm}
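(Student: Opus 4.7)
The strategy is to pick a single frequency point $\xi_0$ where the Bochner–Riesz symbol takes the value $\lambda$, and then exhibit a sequence of functions whose frequency support shrinks to $\xi_0$, so that the operator $\lambda I-B_\delta$ effectively multiplies their Fourier transforms by a symbol that vanishes on that support. The hypothesis \eqref{theorem2.eq1} will then produce the vanishing ratio. The only subtlety is that $(1-|\xi|^2)^\delta_+$ is not a Schwartz function (it fails to be $C^\infty$ across $|\xi|=1$ for generic $\delta$), so we must localize the symbol by a smooth cut-off to fit into the Schwartz hypothesis.

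\textbf{Choice of $\xi_0$.} Given $\lambda\in[0,1]$, select $\xi_0\in\mathbb{R}^d$ with $(1-|\xi_0|^2)^\delta_+=\lambda$ and $|\xi_0|\neq 1$: for $\lambda\in(0,1]$ take $|\xi_0|=\sqrt{1-\lambda^{1/\delta}}<1$, and for $\lambda=0$ take any $\xi_0$ with $|\xi_0|>1$. Then pick $r>0$ small enough that the closed ball $\overline{B(\xi_0,3r)}$ stays strictly on one side of the sphere $\{|\xi|=1\}$, and choose $\phi\in\mathcal{D}$ with $\phi\equiv 1$ on $B(\xi_0,2r)$ and $\mathrm{supp}\,\phi\subset B(\xi_0,3r)$. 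Define
$$m(\xi):=\bigl(\lambda-(1-|\xi|^2)^\delta_+\bigr)\phi(\xi).$$
On $\mathrm{supp}\,\phi$ the expression $(1-|\xi|^2)^\delta_+$ either coincides with the smooth function $(1-|\xi|^2)^\delta$ (when $|\xi_0|<1$) or is identically zero (when $|\xi_0|>1$). Consequently $m\in\mathcal{D}\subset\mathcal{S}$, and by construction $m(\xi_0)=\lambda-(1-|\xi_0|^2)^\delta_+=0$.

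\textbf{Applying the hypothesis.} By assumption \eqref{theorem2.eq1}, there exists $\varphi_0\in\mathcal{D}$ with $\widehat{\varphi_0}(0)=1$ such that $\|(m\widehat{f_{N,\xi_0}})^\vee\|_{\mathbf{B}}/\|f_{N,\xi_0}\|_{\mathbf{B}}\to 0$ as $N\to\infty$, where $\widehat{f_{N,\xi_0}}(\xi)=\varphi_0(N(\xi-\xi_0))$. Since $\varphi_0$ is compactly supported in some ball $B(0,R)$, the support of $\widehat{f_{N,\xi_0}}$ sits inside $B(\xi_0,R/N)$, which is contained in $\{\phi\equiv 1\}$ once $N>R/(2r)$. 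For such $N$,
$$\bigl(\lambda-(1-|\xi|^2)^\delta_+\bigr)\widehat{f_{N,\xi_0}}(\xi)=m(\xi)\widehat{f_{N,\xi_0}}(\xi),$$
so $(\lambda I-B_\delta)f_{N,\xi_0}=(m\widehat{f_{N,\xi_0}})^\vee$. Dividing by $\|f_{N,\xi_0}\|_{\mathbf{B}}$ and letting $N\to\infty$ yields \eqref{theorem2.pf.eq1}.

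\textbf{Main obstacle.} The only real difficulty is the lack of smoothness of $(1-|\xi|^2)^\delta_+$ across the unit sphere: the hypothesis demands a Schwartz multiplier vanishing at $\xi_0$, but $\lambda-(1-|\xi|^2)^\delta_+$ itself is not Schwartz (nor even necessarily smooth). Both the choice $|\xi_0|\neq 1$ and the compactly supported cut-off $\phi$ are arranged precisely to sidestep this singular set, and the frequency-concentration of $f_{N,\xi_0}$ makes the localization exact for all sufficiently large $N$.
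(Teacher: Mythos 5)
Your proof is correct and follows essentially the same route as the paper: choose $\xi_0$ with $(1-|\xi_0|^2)^\delta_+=\lambda$, localize the symbol by a smooth cutoff equal to $1$ near $\xi_0$ so that $(\lambda I-B_\delta)f_{N,\xi_0}=(m\widehat{f_{N,\xi_0}})^\vee$ once the frequency support of $f_{N,\xi_0}$ shrinks into the cutoff region, and invoke \eqref{theorem2.eq1}. If anything, your version is marginally tidier: supporting the cutoff strictly away from the unit sphere makes the smoothness of $m$ immediate (the paper's cutoff support touches the sphere at one point, so smoothness of $m_{\xi_0}$ rests on the infinite-order vanishing of $\psi$ at the boundary of its support), and you treat $\lambda=0$ by the same mechanism rather than as a separate obvious case.
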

\begin{proof}
The infimum in \eqref{theorem2.pf.eq1} is obvious for $\lambda=0$.
So  we assume that $\lambda\in (0, 1]$ from now on.
%Observe that $ B_{\delta} f=0$ for any Schwartz function $f$ with its Fourier transform being supported outside the unit disk.
% it suffices to
%verify \eqref{theorem2.pf.eq1} for $z\in (0, 1)$.
%
%
%$\sigma_{\mathbf B}(B_\delta)$ is bounded and closed, it suffices to prove that
%
% %, where $\psi_N=\varphi^\vee(\cdot/N)$.
% Take $z\in (0, 1)$ and
 Select $\xi_0\in \mathbb{R}^d$ so that $(1-|\xi_0|^2)^\delta_+=\lambda$.
  Then  for sufficiently large $N\ge 1$,
\begin{equation}
\label{theorem2.pf.eq2}(\lambda I-B_\delta) f_{N, \xi_0}= (m_{\xi_0}\widehat {f_{N, \xi_0}})^\vee,
\end{equation}
  where
$m_{\xi_0}(\xi)=(\lambda-(1-|\xi|^{2})^{\delta}_{+})\psi(\xi-\xi_0)$
and $\psi\in {\mathcal D}$ is so chosen
  that $\psi(\xi)=1$ for $|\xi|\le (1-|\xi_0|)/2$ and $\psi(\xi)=0$ for $|\xi|\geq 1-|\xi_0|$. Observe that $m_{\xi_0}\in {\mathcal D}$ satisfies $m_{\xi_0}(\xi_0)=0$. This together with
 \eqref{theorem2.eq1} and \eqref{theorem2.pf.eq2} proves that
  $$\lim_{N\to\infty}\frac{\|(\lambda I-B_\delta) f_{N, \xi_0} \|_{\mathbf{B}}}
{\|f_{N, \xi_0}\|_{\mathbf{B}}}=0.$$
Hence \eqref{theorem2.pf.eq1} is proved for $\lambda\in (0, 1]$.
\end{proof}

\begin{rem} {\rm For $\xi\in \mathbb{R}^d$,  define  modulation operator $M_\xi$ by
$$M_\xi f(x)= e^{ix \xi} f(x).$$
We say that  a Banach space ${\mathbf B}$ is {\em modulation-invariant} if
  for any $\xi\in \mathbb{R}^d$ there exists a positive constant $C_\xi$ such that
\begin{equation*}
\|M_\xi f\|_{\mathbf B}\le C_\xi \|f\|_{\mathbf B}, \quad f\in {\mathbf B}.
\end{equation*}
Such a Banach space with
modulation bound $C_\xi$ %of modulation operators $M_\xi$  on ${\mathbf B}$
 being dominated by a polynomial of $\xi$
 was introduced in \cite{taotams2002}
 to study oscillatory integrals and Bochner-Riesz means.  Modulation-invariant Banach spaces include weighted $L^p$ spaces,  Triebel-Lizorkin spaces  $F_{p, q}^\alpha$,  Besov spaces $B_{p, q}^\alpha$,
  Herz spaces  $K^{\alpha, q}_p$, and
 modulation spaces $M^{p, q}$, where $\alpha\in \mathbb{R}$ and $1\le p, q<\infty$
 \cite{grochenigbook, grafakosbook, luyanghu08, triebelbook}.
 For functions $f_{N, \xi_0}, N\ge 1$, in  Theorem \ref{theorem2},
$$f_{N, \xi_0}(x)= e^{i x\xi_0}  (\varphi_0(N\cdot))^\vee(x)$$
and
$$ (m \widehat {f_{N, \xi_0}})^\vee (x)= e^{i x\xi_0}   (m_{\xi_0}  \varphi_0 (N\cdot))^\vee (x),$$
where $m_{\xi_0}(\xi)=m(\xi+\xi_0)$ satisfies $m_{\xi_0}(0)=0$.
Then for a modulation-invariant space ${\mathbf B}$,
 the limit \eqref{theorem2.eq1}  holds for any $\xi_0\in \mathbb{R}^d$ if and only if it is true for $\xi_0=0$.
Therefore we obtain the following  result from Theorems \ref{theorem1} and  \ref{theorem2}.

\begin{cor}\label{theorem3}
 Let  ${\mathbf B}$ be a modulation-invariant Banach space of tempered distributions
 with $\mathcal{S}$ being dense in ${\mathbf B}$.
 Assume that \eqref{space.property1} holds for some $\alpha_0, \beta_0\ge 0$ and that
  there exists $\varphi_0\in {\mathcal D}$ such that
 $$\hat \varphi_0(0)=1\ {\rm  and} \
  \lim_{N\to\infty} \|(m \varphi_0(N\cdot))^\vee\|_{\mathbf{B}}/
\|(\varphi_0(N\cdot))^\vee\|_{\mathbf{B}}=0$$
 for  all  Schwartz functions $m(\xi)$ with $m(0)=0$.
If the Bochner-Riesz mean  $B_\delta$ is bounded on ${\mathbf B}$, then
its spectrum on ${\mathbf B}$ contains the unit interval
$[0,1]$.
\end{cor}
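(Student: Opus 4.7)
The plan is to derive Corollary \ref{theorem3} directly from Theorem \ref{theorem2}: once hypothesis \eqref{theorem2.eq1} has been verified at every $\xi_0 \in \mathbb{R}^d$, that theorem produces $\inf_{f \neq 0} \|(\lambda I - B_\delta)f\|_{\mathbf{B}}/\|f\|_{\mathbf{B}} = 0$ for each $\lambda \in [0,1]$, which immediately precludes a bounded inverse for $\lambda I - B_\delta$ and therefore forces $\lambda$ into the spectrum of $B_\delta$ on $\mathbf{B}$. So the task collapses to upgrading the corollary's $\xi_0 = 0$ limit hypothesis to the general $\xi_0$ statement required by Theorem \ref{theorem2}, and modulation invariance is precisely the tool built for this purpose.

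The key identity is the one indicated in the Remark preceding the statement. Given $\xi_0 \in \mathbb{R}^d$, I would take $\varphi_0$ from the corollary's hypothesis and define $f_{N,\xi_0}$ through $\widehat{f_{N,\xi_0}}(\xi) = \varphi_0(N(\xi - \xi_0))$. A change of variable in the inverse Fourier integral produces the modulation factor $e^{ix\xi_0}$, so
$$f_{N,\xi_0}(x) = M_{\xi_0}\bigl[(\varphi_0(N\cdot))^\vee\bigr](x),$$
and for any Schwartz $m$ with $m(\xi_0) = 0$, setting $m_{\xi_0}(\eta) := m(\eta + \xi_0)$ gives a Schwartz function with $m_{\xi_0}(0) = 0$ satisfying
$$\bigl(m \widehat{f_{N,\xi_0}}\bigr)^\vee(x) = M_{\xi_0}\bigl[(m_{\xi_0}\varphi_0(N\cdot))^\vee\bigr](x).$$

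The final step converts these pointwise identities into $N$-uniform norm equivalences via modulation invariance. Applying $\|M_\xi g\|_{\mathbf{B}} \le C_\xi \|g\|_{\mathbf{B}}$ with both $\xi = \xi_0$ and $\xi = -\xi_0$ yields $\|f_{N,\xi_0}\|_{\mathbf{B}} \asymp \|(\varphi_0(N\cdot))^\vee\|_{\mathbf{B}}$ and $\|(m\widehat{f_{N,\xi_0}})^\vee\|_{\mathbf{B}} \asymp \|(m_{\xi_0}\varphi_0(N\cdot))^\vee\|_{\mathbf{B}}$, with implicit constants depending only on $\xi_0$. Taking quotients reduces \eqref{theorem2.eq1} at the point $\xi_0$ to the identical limit at the origin applied to the Schwartz function $m_{\xi_0}$, which vanishes by the corollary's assumed hypothesis. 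Theorem \ref{theorem2} then delivers the conclusion, and I expect that the only real obstacle is careful bookkeeping of the modulation and translation in these Fourier-inversion identities, rather than any deeper analytic difficulty.
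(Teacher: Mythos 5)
Your proposal is correct and follows exactly the route the paper takes: the remark preceding the corollary reduces hypothesis \eqref{theorem2.eq1} at arbitrary $\xi_0$ to the assumed $\xi_0=0$ case via the same modulation identities and two-sided use of modulation invariance, and then invokes Theorem \ref{theorem2}. No gaps.
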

}\end{rem}

\section{Remarks}
\label{remark.section}

In this section,  we extend  conclusions in   Theorem \ref{maincor1} to weighted $L^p$ spaces, Triebel-Lizorkin spaces, Besov spaces, and Herz spaces.

\subsection{Spectra on weighted $L^p$ spaces}
Let $1\le p<\infty$ and ${\mathcal Q}$ contain all cubes $Q\subset \mathbb{R}^d$. A positive function $w$ is said to be a {\em Muckenhoupt $A_p$-weight}
if
\begin{equation*}
\Big(\frac{1}{|Q|}\int_Q w(x) dx\Big) \ \Big (\frac{1}{|Q|} \int_Q w(x)^{-1/(p-1)} dx\Big)^{p-1}\le C, \ Q\in {\mathcal Q}
\end{equation*}
 for $1<p<\infty$, and
\begin{equation*}
\frac{1}{|Q|} \int_Q w(x) dx\le C \inf_{x\in Q}  w(x), \ Q\in {\mathcal Q}
\end{equation*}
for $p=1$ \cite{garciabook, muckenhoupt72}.
For $\delta>(d-1)/2$, convolution kernel of the Bochner-Riesz mean $B_\delta$ is dominated by a multiple of
$(1+|x|)^{-\delta-(d+1)/2}$ and hence it is bounded on weighted $L^p$ space $L^p_w$ for all $1\le p<\infty$ and Muckenhoupt $A_p$-weights $w$. For $\delta=(d-1)/2$, complex interpolation method was introduced in \cite{shisun92}
to establish $L^p_w$-boundedness of $B_\delta$ for all $1<p<\infty$ and Muckenhoupt $A_p$-weights $w$.
The reader may refer to \cite{garciabook, lisun12} %, long11}
and references therein for
$L^p_w$-boundedness of Bochner-Riesz means  with various weights $w$. In this subsection, we consider spectra of Bochner-Riesz means on %weighted function space
 $L^p_w$.

\begin{thm}\label{weightedtheorem} Let $\delta>0, 1\le p<\infty$,
and $w$ be a Muckenhoupt $A_p$-weight.
If the Bochner-Riesz mean  $B_{\delta}$ is bounded on $L^p_w$, then
its spectrum on $L^p_w$ is the unit interval $[0,1]$.
\end{thm}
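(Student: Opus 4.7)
The plan is to deduce Theorem \ref{weightedtheorem} from Theorem \ref{theorem1} together with Corollary \ref{theorem3} applied to $\mathbf{B}=L^p_w$. Once the hypotheses of these results are verified, Theorem \ref{theorem1}(i)$\Rightarrow$(ii) gives $\sigma_p(B_\delta)\subset[0,1]$ while Corollary \ref{theorem3} produces the reverse inclusion $[0,1]\subset\sigma_p(B_\delta)$, yielding equality. Three items must be checked: (a) the convolution bound \eqref{space.property1} with some $\alpha_0,\beta_0$; (b) modulation invariance of $L^p_w$; and (c) the vanishing limit in Corollary \ref{theorem3}.

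Items (a) and (b) are standard. Taking $\alpha_0=d+1$ and $\beta_0=0$, every $K\in\mathcal{S}_{d+1,0}$ satisfies $|K(x)|\le C\|K\|_{\mathcal{S}_{d+1,0}}(1+|x|)^{-d-1}$, so $|K|$ is dominated by a radial decreasing $L^1$ function and hence $|K*f|\le C\|K\|_{\mathcal{S}_{d+1,0}}Mf$ pointwise. For $1<p<\infty$ Muckenhoupt's theorem then yields \eqref{space.property1}. For $p=1$ one instead uses Fubini to write $\|K*f\|_{L^1_w}\le C\int|f(y)|\bigl(\int w(x)(1+|x-y|)^{-d-1}\,dx\bigr)\,dy$, and a dyadic annular decomposition combined with the $A_1$ bound $w(B(y,R))\le CR^d w(y)$ (valid at Lebesgue points of $w$) shows the inner integral is at most $Cw(y)$. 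Modulation invariance is immediate from $|M_\xi f|=|f|$, which forces $\|M_\xi f\|_{L^p_w}=\|f\|_{L^p_w}$.

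For (c), pick a non-negative $\varphi_0\in\mathcal{D}$ with $\hat\varphi_0(0)=1$; then $\varphi_0^\vee$ is continuous with $\varphi_0^\vee(0)=(2\pi)^{-d}>0$, so $|\varphi_0^\vee|\ge c$ on some ball $|y|\le R$. Given a Schwartz $m$ with $m(0)=0$, set $K=m^\vee$ and $f_N(x)=N^{-d}\varphi_0^\vee(x/N)$. Since $\int K=0$, the pointwise estimate \eqref{cond2.verf} applies, and taking $L^p_w$-norms yields
\begin{equation*}
\|K*f_N\|_{L^p_w}\le C_\alpha N^{-1/2}\bigl(\|(1+|\cdot|)^{-\alpha}*|f_N|\|_{L^p_w}+\|N^{-d}(1+|\cdot/N|)^{-\alpha}\|_{L^p_w}\bigr).
\end{equation*}
The first summand is bounded by $CN^{-1/2}\|f_N\|_{L^p_w}$ via the argument of (a). For the second, the change of variables $y=x/N$ gives $\|N^{-d}h(\cdot/N)\|_{L^p_w}^p=N^{-dp+d}\int|h|^p\,dw_N$ with $dw_N(y)=w(Ny)\,dy$, so the ratio of this summand to $\|f_N\|_{L^p_w}$ reduces to $(\int|g|^p\,dw_N/\int|\varphi_0^\vee|^p\,dw_N)^{1/p}$ with $g(y)=(1+|y|)^{-\alpha}$. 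The key observation is that $w_N\in A_p$ with the same constant as $w$, so its doubling bound is uniform in $N$; a dyadic annular decomposition with $\alpha$ chosen large relative to the doubling exponent bounds the numerator by $Cw_N(B(0,1))$, while doubling bounds the denominator below by $c^p w_N(B(0,R))\ge C^{-1}w_N(B(0,1))$. The resulting ratio is uniform in $N$, giving the vanishing limit. The main obstacle is precisely this last step, where dilation invariance of the $A_p$-constant is essential to preserve uniformity in $N$; everything else is a matter of routine application of the maximal-function theory for $A_p$-weights.
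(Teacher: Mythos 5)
Your proposal is correct and follows essentially the same route as the paper: reduce to verifying \eqref{space.property1} with $\alpha_0=d+1,\beta_0=0$, modulation invariance, and the vanishing limit \eqref{theorem2.eq1}, the last via the pointwise bound \eqref{cond2.verf} together with the doubling property of $A_p$-weights. Your phrasing through the dilated weight $w_N(y)=w(Ny)$ (whose $A_p$-constant is that of $w$) is just a change of variables away from the paper's direct comparison of $w([-N,N]^d)$ with $w([-\epsilon_0N,\epsilon_0N]^d)$, and you supply slightly more detail on the $p=1$ convolution estimate that the paper leaves as standard.
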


\begin{proof} Denote the norm on $L^p_w$ by $\|\cdot\|_{p,w}$.
By Theorems \ref{theorem1} and \ref{theorem2}, and modulation-invariance of  $L^p_w$, it suffices to prove
\begin{equation}\label{weightedtheorem.pf.eq1} \|K\ast f\|_{p,w}\leq C\|K\|_{{\mathcal S}_{d+1, 0}}\|f\|_{p, w} \ \ {\rm for \ all}\  f\in L^p_w,
\end{equation}
and
  \begin{equation}\label{weightedtheorem.pf.eq2}
  \lim_{N\to\infty}\frac{\|(m \varphi_0(N\cdot))^\vee\|_{p,w}}
{\| (\varphi_0(N\cdot))^\vee\|_{p,w}}=0
 \end{equation}
for  all Schwartz functions  $\varphi_0$ and $m$ with
 $\widehat \varphi_0(0)=1$ and $m(0)=0$.

 Observe that
 $|K(x)|\le C\|K\|_{{\mathcal S}_{d+1, 0}}(1+|x|)^{-d-1}$. Then \eqref{weightedtheorem.pf.eq1} follows from the standard argument
 for weighted norm inequalities \cite{garciabook}.

% By , there exist a positive constant $C_\alpha$
% for any $\alpha>0$ such that
% \begin{equation*}
%|(m \varphi_0(N\cdot))^\vee(x)|\le C_\alpha N^{-1/2}
%\Big(\int_{\RR^d} (1+|x-y|)^{-\alpha} |(\varphi_0(N\cdot))^\vee(y)|dy+  N^{-d} (1+|x/N|)^{-\alpha}\Big)
% \end{equation*}
% for all $N\ge 1$.
 %where $f_N(x)=N^{-d} \varphi_0^\vee(x/N)$.
 Recall that any $A_p$-weight is a doubling measure \cite{garciabook}. This doubling property together with
  \eqref{cond2.verf}  leads to
 \begin{equation}\label{weightedtheorem.pf.eq3}
 \|(m \varphi_0(N\cdot))^\vee\|_{p,w}^p\le C N^{-p/2}\|(\varphi_0(N\cdot))^\vee\|_{p, w}^p+ CN^{-(d+1/2)p}
  w([-N, N]^d).
 \end{equation}
 On the other hand, there exists $\epsilon_0>0$ % by the continuity of the function $\varphi_0^\vee$
  such that
 $|\varphi_0^\vee(x)|\ge |\varphi_0^\vee(0)|/2\ne 0$
 for  all $|x|\le \epsilon_0$.
This implies that
\begin{equation}\label {weightedtheorem.pf.eq4}
\|(\varphi_0(N\cdot))^\vee\|_{p, w}^p\ge C N^{-dp}
w([-\epsilon_0 N, \epsilon_0 N]^d).
\end{equation}
Combining \eqref{weightedtheorem.pf.eq3}, \eqref{weightedtheorem.pf.eq4} and the doubling
property for the weight $w$, we establish the limit
\eqref{weightedtheorem.pf.eq2} and complete the proof.
\end{proof}

\subsection{Spectra on Triebel-Lizorkin spaces and Besov spaces}
Let $\phi_0$ and $\psi\in {\mathcal S}$ be so chosen that $\widehat \phi_0$ is supported in $\{\xi, |\xi|\le 2\}$,
$\hat \psi$ supported in $\{\xi, 1/2\le |\xi|\le 2\}$, and
$$\widehat \phi_0(\xi)+\sum_{l=1}^\infty \hat \psi(2^{-l}\xi)=1,\ \  \xi\in \mathbb{R}^d.$$
For $\alpha\in \mathbb{R}$ and $1\le p, q<\infty$, let Triebel-Lizorkin space $F_{p, q}^\alpha$  contain all tempered distributions $f$
with
$$\|f\|_{F_{p,q}^\alpha}:=\|\phi_0*f\|_p+\Big\|\Big(\sum_{l=1}^\infty
2^{l\alpha q}|\psi_l*f|^q\Big)^{1/q}\Big\|_p<\infty,$$
where $\psi_l =2^{ld} \psi(2^l\cdot), l\ge 1$.
Similarly, let Besov space $B^{\alpha}_{p, q}$  be the space of tempered distributions $f$
with
$$\|f\|_{B_{p,q}^\alpha}:=\|\phi_0*f\|_p+\Big(\sum_{l=1}^\infty
2^{l\alpha q}\|\psi_l*f\|_p^q\Big)^{1/q}<\infty.$$
Next is our results about spectra of Bochner-Riesz means on Triebel-Lizorkin spaces and on Besov spaces.

\begin{thm}
Let $\delta>0, \alpha\in \mathbb{R}$ and  $1\le p, q<\infty$.
If the Bochner-Riesz mean  $B_{\delta}$ is bounded on $F_{p, q}^\alpha$ (resp. $B^\alpha_{p,q}$), then
its spectrum  on $F_{p, q}^\alpha$  (resp. on $B^\alpha_{p, q}$) is the unit interval $[0,1]$.
\end{thm}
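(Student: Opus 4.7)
The plan is to reduce the theorem to Corollary \ref{theorem3} applied with $\mathbf{B} = F^\alpha_{p,q}$ or $\mathbf{B} = B^\alpha_{p,q}$. Once the corollary applies, its conclusion gives $[0,1]\subset \sigma_p(B_\delta)$, and the reverse inclusion comes for free from Theorem \ref{theorem1}: under hypothesis (i) the equivalent statement (ii) says $(zI-B_\delta)^{-1}$ is bounded on $\mathbf{B}$ for every $z\in \mathbb{C}\setminus [0,1]$, so $\sigma_p(B_\delta)\subset [0,1]$. Thus the whole task is to check, for both families of spaces, the three hypotheses of Corollary \ref{theorem3}: density of $\mathcal{S}$, the convolution estimate \eqref{space.property1}, modulation invariance, and the scaling-limit condition.

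Density of $\mathcal{S}$ in $F^\alpha_{p,q}$ and $B^\alpha_{p,q}$ for $1\le p,q<\infty$ is standard (see e.g.\ the cited monograph \cite{triebelbook}), and modulation invariance of these spaces is already recorded in the remark following the proof of Theorem \ref{theorem2}. For \eqref{space.property1} I would take $\alpha_0=d+1$, $\beta_0=0$: a kernel $K\in \mathcal{S}_{d+1,0}$ satisfies $|K(x)|\le C\|K\|_{\mathcal{S}_{d+1,0}}(1+|x|)^{-d-1}$ and hence $\|K\|_1\le C\|K\|_{\mathcal{S}_{d+1,0}}$. Since $K\ast$ commutes with the Littlewood-Paley projections $\phi_0\ast$ and $\psi_l\ast$, a vector-valued Minkowski inequality for the $\ell^q(L^p)$- (resp.\ $L^p(\ell^q)$-) norm of the sequence $\bigl(2^{l\alpha}\psi_l\ast f\bigr)_{l\ge 1}$ yields
\begin{equation*}
\|K\ast f\|_{B^\alpha_{p,q}}\le \|K\|_1\,\|f\|_{B^\alpha_{p,q}}
\quad\text{and}\quad
\|K\ast f\|_{F^\alpha_{p,q}}\le \|K\|_1\,\|f\|_{F^\alpha_{p,q}},
\end{equation*}
which together with the bound above gives \eqref{space.property1}.

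For the scaling-limit condition, I would choose any $\varphi_0\in \mathcal{D}$ with $\widehat{\varphi_0}(0)=1$. Because $\varphi_0$ is compactly supported, there is $R>0$ with $\mathrm{supp}(\varphi_0(N\cdot))\subset\{|\xi|\le R/N\}$, and the same containment holds for $m\varphi_0(N\cdot)$ with $m$ Schwartz. Hence for $N$ sufficiently large this Fourier support lies entirely in the region where $\widehat{\phi_0}=1$ and is disjoint from every annulus $\{2^{l-1}\le |\xi|\le 2^{l+1}\}$ with $l\ge 1$, so
\begin{equation*}
\|(\varphi_0(N\cdot))^\vee\|_{F^\alpha_{p,q}}=\|(\varphi_0(N\cdot))^\vee\|_{B^\alpha_{p,q}}=\|(\varphi_0(N\cdot))^\vee\|_p
\end{equation*}
and the same identity holds with $m\varphi_0(N\cdot)$ in place of $\varphi_0(N\cdot)$. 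Applying \eqref{cond2.verf} with $K=m^\vee$ (note $\widehat{K}(0)=m(0)=0$) and $f_N=(\varphi_0(N\cdot))^\vee$, I get
\begin{equation*}
\lim_{N\to\infty}\frac{\|(m\varphi_0(N\cdot))^\vee\|_{F^\alpha_{p,q}}}{\|(\varphi_0(N\cdot))^\vee\|_{F^\alpha_{p,q}}}=\lim_{N\to\infty}\frac{\|m^\vee\ast f_N\|_p}{\|f_N\|_p}=0,
\end{equation*}
and the same for the Besov norm. Corollary \ref{theorem3} now applies and, combined with Theorem \ref{theorem1} as above, gives the stated spectral identity.

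The main conceptual obstacle is the scaling limit, but as outlined above it dissolves once one observes that the Littlewood-Paley decomposition collapses to its low-frequency piece on functions whose frequency support shrinks to the origin; the convolution estimate then reduces exactly to the $L^p$ computation already performed in \eqref{cond2.verf}. The other hypotheses are purely bookkeeping on $F^\alpha_{p,q}$ and $B^\alpha_{p,q}$.
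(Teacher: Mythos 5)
Your argument is correct, but it takes a genuinely different route from the paper. The paper's proof is a two-line reduction: since $B_\delta$ and $(zI-B_\delta)^{-1}-z^{-1}I$ ($z\notin[0,1]$) are multiplier operators with compactly supported symbols, their boundedness on $F^\alpha_{p,q}$ and on $B^\alpha_{p,q}$ is equivalent to their boundedness on $L^p$, and the conclusion is then read off from the $L^p$ result (Theorem \ref{maintheorem}). You instead treat $F^\alpha_{p,q}$ and $B^\alpha_{p,q}$ as instances of the abstract Banach space ${\mathbf B}$ and verify the hypotheses of Theorem \ref{theorem1} and Corollary \ref{theorem3} directly: the convolution bound \eqref{space.property1} with $\alpha_0=d+1$, $\beta_0=0$ via $\|K\|_1\le C\|K\|_{{\mathcal S}_{d+1,0}}$ and vector-valued Minkowski/Young (correct, since $K\ast$ commutes with the Littlewood--Paley projections), and the scaling-limit condition via the observation that for large $N$ the Littlewood--Paley decomposition of $(\varphi_0(N\cdot))^\vee$ and $(m\varphi_0(N\cdot))^\vee$ collapses to the low-frequency block, reducing everything to the $L^p$ computation \eqref{cond2.verf}. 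Both routes are valid; the paper's is shorter but leans on the (standard, unproved there) equivalence of boundedness for compactly supported multipliers, and it only directly addresses the resolvent bounds for $z\notin[0,1]$, leaving the inclusion $[0,1]\subset\sigma(B_\delta)$ implicit. Your route is longer but self-contained within the paper's framework and, via Corollary \ref{theorem3}, gives that lower inclusion explicitly; your handling of the reverse inclusion through Theorem \ref{theorem1}(ii) is also the right mechanism. The only points worth polishing are bookkeeping: the normalization $\varphi_0^\vee(0)=(2\pi)^{-d}\widehat{\varphi_0}(0)\neq 0$ needed to apply \eqref{cond2.verf}, and the (routine) check that boundedness of the multiplier with symbol $(z-(1-|\xi|^2)_+^\delta)^{-1}$ really yields a two-sided inverse of $zI-B_\delta$ on these spaces by density of $\mathcal S$.
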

\begin{proof}
For $z\not\in [0,1]$ and $\delta>0$,
 both $B_\delta$ and $(z I-B_\delta)^{-1}-z^{-1}I$ are  multiplier operators with compactly supported symbols.
Therefore $B_\delta$ (resp. $(z I-B_\delta)^{-1}$) is bounded on the
Triebel-Lizorkin space $F^\alpha_{p,q}$ if and only if  it is bounded on the Besov space  $B^\alpha_{p,q}$ if and only if it is bounded on $L^p$.
The above equivalence together with Theorem \ref{maintheorem} yields our desired conclusions.
% for spectra of Bochner-Riesz means on Triebel-Lizorkin spaces and  on Besov spaces. %completes the proof.
 %the conclusion follows from Theorem \ref{maintheorem}.
\end{proof}

\subsection{Spectra on Herz spaces}
For $\alpha\in \mathbb{R}$ and $1\le p, q<\infty$, let Herz space $K^{\alpha, q}_{p}$  contain all
locally $p$-integrable functions $f$
with
$$\|f\|_{K_{p}^{\alpha, q}}:=\|f\chi_{|\cdot|\le 1}\|_p+\Big(\sum_{l=1}^\infty
2^{l\alpha q}\|f\chi_{2^{l-1}<|\cdot|\le 2^{l}}\|_p^q\Big)^{1/q}<\infty,$$
where  $\chi_E$ is the characteristic function on a set $E$.
The boundedness of Bochner-Riesz means on Herz spaces is well studied, see  for instance \cite{luyanghu08, wang2011}.
Following the argument used in the proof of Theorem \ref{weightedtheorem}, we have
% the following result for spectra
%of Bochner-Riesz means on Herz spaces.

\begin{thm} Let $\delta>0, 1\le p, q<\infty$ and $\alpha>-d/p$.
If the Bochner-Riesz mean  $B_{\delta}$ is bounded on $K_{p}^{\alpha, q}$, then
its spectrum  on $K_{p}^{\alpha, q}$ is %the unit interval
$[0,1]$.
\end{thm}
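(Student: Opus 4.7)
The plan is to replicate the proof of Theorem \ref{weightedtheorem} with $\mathbf{B} = K^{\alpha,q}_p$. Since $K^{\alpha,q}_p$ is modulation-invariant ($|M_\xi f| = |f|$ pointwise, so $C_\xi \equiv 1$) and $\mathcal{S}$ is dense in $K^{\alpha,q}_p$ for $1 \le p, q < \infty$, by virtue of Corollary \ref{theorem3} together with the equivalence (i)$\Longleftrightarrow$(ii) of Theorem \ref{theorem1} applied to $\mathbf{B} = K^{\alpha,q}_p$, it suffices to verify two ingredients: the convolution bound
\[
\|K * f\|_{K^{\alpha,q}_p} \le C \,\|K\|_{\mathcal{S}_{d+1,0}}\, \|f\|_{K^{\alpha,q}_p}, \qquad f \in K^{\alpha,q}_p,
\]
and, for all Schwartz functions $\varphi_0, m$ with $\widehat\varphi_0(0) = 1$ and $m(0) = 0$, the dilation limit
\[
\lim_{N\to\infty}\frac{\|(m\varphi_0(N\cdot))^\vee\|_{K^{\alpha,q}_p}}{\|(\varphi_0(N\cdot))^\vee\|_{K^{\alpha,q}_p}} = 0.
\]

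For the convolution bound, I would use the pointwise estimate $|K(x)| \le C\|K\|_{\mathcal{S}_{d+1,0}}(1+|x|)^{-d-1}$ together with a dyadic annular decomposition of $f$ and of $K*f$. Writing $f_j := f\,\chi_{2^{j-1} < |\cdot| \le 2^j}$ (with an analogous inner-ball piece for $j=0$) and evaluating $K*f_j$ on the annulus $|x|\approx 2^l$, the kernel decay yields a bound on $\|(K*f_j)\chi_{|\cdot|\approx 2^l}\|_p$ that decays rapidly in $|j-l|$ times $\|f_j\|_p$. Inserting this into the Herz norm and invoking a discrete Schur-type estimate delivers the claim; this is the same structure used throughout the Herz-space literature (see \cite{luyanghu08, wang2011}), in which the hypothesis $\alpha > -d/p$ is precisely what controls the contribution of the inner annuli.

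For the dilation limit, set $K := m^\vee \in \mathcal{S}$, so that $\widehat K(0) = m(0) = 0$, and $f_N := (\varphi_0(N\cdot))^\vee = N^{-d}\varphi_0^\vee(\cdot/N)$, so that $(m\varphi_0(N\cdot))^\vee = K*f_N$. The pointwise estimate \eqref{cond2.verf}, taken with an exponent $M \ge d+1$ large, gives
\[
|K*f_N(x)| \le C_M\, N^{-1/2}\Bigl(\bigl((1+|\cdot|)^{-M} *|f_N|\bigr)(x) + N^{-d}(1+|x/N|)^{-M}\Bigr).
\]
Taking Herz norms, the first term is controlled by $C\,N^{-1/2}\|f_N\|_{K^{\alpha,q}_p}$ via the convolution bound just established, and the second term has the form $N^{-d}g(\cdot/N)$ for a Schwartz $g$; a direct computation yields $\|N^{-d}g(\cdot/N)\|_{K^{\alpha,q}_p} \le C N^{\alpha + d/p - d}$. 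A matching lower bound $\|f_N\|_{K^{\alpha,q}_p} \ge c N^{\alpha + d/p - d}$ follows by choosing $\epsilon_0 > 0$ with $|\varphi_0^\vee| \ge c$ on $|x|\le \epsilon_0$, so that $|f_N(x)| \ge c N^{-d}$ on $|x|\le \epsilon_0 N$, and summing the annular contributions $(2^l)^{(\alpha+d/p)q}$ over $1 \le l \le \log_2(\epsilon_0 N)$. Together these give that the ratio is $O(N^{-1/2})$.

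The main technical obstacle is the two-sided estimate of $\|f_N\|_{K^{\alpha,q}_p}$: the geometric sum $\sum_{l \le \log_2(\epsilon_0 N)} 2^{l(\alpha+d/p)q}$ is dominated by its largest term, and hence comparable to $N^{(\alpha+d/p)q}$, precisely when $\alpha + d/p > 0$, which is exactly the standing hypothesis $\alpha > -d/p$. Once both ingredients are in place, Corollary \ref{theorem3} gives $[0,1] \subseteq \sigma_{K^{\alpha,q}_p}(B_\delta)$, while the forward implication of Theorem \ref{theorem1} shows that $(zI-B_\delta)^{-1}$ is bounded on $K^{\alpha,q}_p$ for every $z \in \mathbb{C}\setminus[0,1]$, i.e., $\sigma_{K^{\alpha,q}_p}(B_\delta) \subseteq [0,1]$, completing the proof.
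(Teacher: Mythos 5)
Your proposal follows exactly the route the paper intends: the paper's entire ``proof'' of this theorem is the single sentence that one should follow the argument of Theorem \ref{weightedtheorem}, and you have correctly identified the two ingredients that argument requires --- the convolution bound \eqref{space.property1} on $K^{\alpha,q}_p$ and the dilation limit of Corollary \ref{theorem3} --- together with the correct mechanism (domination of the geometric sum $\sum_{l}2^{l(\alpha+d/p)q}$ by its top term) by which the hypothesis $\alpha>-d/p$ enters. The two-sided estimate $\|f_N\|_{K^{\alpha,q}_p}\approx N^{\alpha+d/p-d}$ and the resulting $O(N^{-1/2})$ decay of the ratio are correct.

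One step as written would fail and needs a (routine) adjustment: the convolution bound
$\|K*f\|_{K^{\alpha,q}_p}\le C\|K\|_{{\mathcal S}_{d+1,0}}\|f\|_{K^{\alpha,q}_p}$
is \emph{false} once $\alpha\ge d+1-d/p$. Indeed, a kernel $K\in{\mathcal S}_{d+1,0}$ is only guaranteed to decay like $(1+|x|)^{-d-1}$, so for $f$ a bump at the origin one gets $\|(K*f)\chi_{2^{l-1}<|\cdot|\le 2^l}\|_p\approx 2^{-l(d+1)+ld/p}$, and $\sum_l 2^{l\alpha q}2^{-l(d+1-d/p)q}$ diverges for $\alpha\ge d+1-d/p$ (e.g.\ $d=1$, $p=1$, $\alpha=2$). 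The fix is immediate because Theorems \ref{theorem1} and \ref{theorem2} are stated for arbitrary $\alpha_0,\beta_0$: choose $\alpha_0>\alpha+d/p$ (so the kernel decay $(1+|x|)^{-\alpha_0}$ beats the Herz weight), and note that all kernels actually arising in the proof of Theorem \ref{theorem1} ($m_1^\vee$, $K_n$, $\tilde K_n$, etc.) are Schwartz, hence lie in ${\mathcal S}_{\alpha_0,\beta_0}$ for every choice. The same remark applies to your use of \eqref{cond2.verf}: the exponent $M$ there must likewise be taken larger than $\alpha+d/p$, which you do allow. With $\alpha_0$ chosen this way your Schur-type argument goes through, and the rest of the proof is sound.
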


\section*{Acknowledgements}
The project is partially supported by National Science Foundation (DMS-1412413) and NSF of China (Grant Nos. 11426203).

%% The Appendices part is started with the command \appendix;
%% appendix sections are then done as normal sections
%% \appendix

%% If you have bibdatabase file and want bibtex to generate the
%% bibitems, please use
%
% \bibliographystyle{elsarticle-harv}
%  \bibliography{<reference>}
%\citep{barnes90}
%\cite{}
%% else use the following coding to input the bibitems directly in the
%% TeX file.

\end{document}